\newtheorem{theorem}{Theorem}
\newtheorem{lemma}[theorem]{Lemma}
\newtheorem{proposition}[theorem]{Proposition}
\newtheorem{corollary}[theorem]{Corollary}
\newtheorem{claim}[theorem]{Claim}
\newtheorem{rmk}[theorem]{\normalfont{\em{Remark}}}
\renewcommand{\H}{\textup{H}}
\renewcommand*\env@matrix[1][\arraystretch]{%
\edef\arraystretch{#1}%
\hskip -\arraycolsep
\let\@ifnextchar\new@ifnextchar
\array{*\c@MaxMatrixCols c}}
\date{\today}
\title[Separability of unipotent-free abelian subgroups in linear groups]{Separability of unipotent-free abelian subgroups in linear groups}
\date{\today}
\author{Konstantinos Tsouvalas}
\begin{document}

\frenchspacing

\maketitle

\begin{abstract} Let ${\bf F}$ be a field of characteristic zero. It is proved that for any finitely generated linear group $\Gamma<\mathsf{GL}_n({\bf F})$, every unipotent-free abelian subgroup of $\Gamma$ is separable.\end{abstract}

\section{Introduction}

A subgroup $\H<\Gamma$ is called {\em separable} if for every $g\in \Gamma\smallsetminus \H$, there is a finite group $F$ and a homomorphism $\phi:\Gamma\rightarrow F$ with $\phi(g) \in F \smallsetminus \phi(\H)$. Equivalently, $\H<\Gamma$ is separable if it is an intersection of finite-index subgroups of $\Gamma$. Determining which subgroups in particular classes of finitely generated groups are separable has been a problem of broad interest in geometric group theory; we refer the reader to \cite{Formanek, Scott, ALR, Hamilton, Hsu-Wise,  Metaftsis-Raptis, Leininger-McReynolds, Wilton, Brid-Wil-sep, Fruchter} for a non exhaustive list of related results.

In the present note, we are interested in separability of abelian subgroups in linear groups. Let ${\bf F}$ be a field. Recall that a matrix $g\in \mathsf{GL}_n({\bf F})$ is {\em unipotent} if $g-\textup{I}_n$ is a nilpotent matrix. A subgroup $\H<\mathsf{GL}_n({\bf F})$ is called {\em unipotent-free} if it does not contain any non-trivial unipotent matrices. Our main result is the following.

 \begin{theorem}\label{main} Let ${\bf F}$ be a field of characteristic zero and let $\Gamma<\mathsf{GL}_n({\bf F})$, $n>0$, be a finitely generated subgroup. Then, any abelian unipotent-free subgroup of $\Gamma$ is separable. \end{theorem}

In Theorem \ref{main}, the assumption that the abelian subgroup of $\Gamma$ is unipotent-free is necessary, since abelian subgroups of linear groups, which contain unipotent elements, might fail to be separable (see Remark \ref{BS}). 

Crucial for the proof of Theorem \ref{main} is Proposition \ref{order}, which essentially follows by combining the main results from \cite{Chevalley} and \cite{Grunewald-Segal}. Theorem \ref{main} applies to unipotent-free finitely generated linear groups in characteristic zero, including subgroups of compact Lie groups, uniform lattices in algebraic semisimple Lie groups, quasi-isometrically embedded subgroups of $\mathsf{GL}_m(\mathbb{C})$, \hbox{$m\geq 2$,} all of whose cyclic subgroups are undistorted (including Anosov subgroups of $\mathsf{GL}_m(\mathbb{C})$ \cite{Labourie, GW}), and projective convex cocompact subgroups of $\mathsf{PGL}_{\ell}(\mathbb{R})$ \cite{DGK}.

\section{proof of theorem \ref{main}}
We devote the rest of the note for the proof of Theorem \ref{main}. For a commutative ring $A$ denote by $A^{\times}$ the multiplicative group of its units and let $\overline{\mathbb{Q}}$ be the field of algebraic numbers. For a number field $K\subset \overline{\mathbb{Q}}$ denote by $\mathcal{O}_{K}$ the ring of integers in $K$. For $q\in \mathbb{N}$ and $x\in K$, $x\equiv 1(\textup{mod}q)$ if $x=1+\kappa qz^{-1}$ for some $\kappa,z\in \mathcal{O}_K$ where $z,q$  are coprime in $\mathcal{O}_K$. 

\begin{proposition}\label{order} Let $A$ be a finitely generated subring of $\mathbb{C}$ and $r\in \mathbb{N}$. There exists a finite ring $\mathsf{R}$ and a homomorphism $\phi_r:A\rightarrow \mathsf{R}$ with the property that if $x\in A^{\times}$ and $\phi_r(x)=1$, then there is $y\in A^{\times}$ such that $x=y^r$.\end{proposition}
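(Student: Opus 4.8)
The plan is to realize $\mathsf R$ as a finite quotient ring $A/\mathfrak a$ with $\phi_r$ the canonical projection. This is no loss of generality: the image of $A$ under any homomorphism into a finite ring is such a quotient, and passing to the image affects neither the hypothesis $\phi_r(x)=1$ nor the conclusion. With this normalization, the property required of $\phi_r$ reads: every $x\in A^\times$ with $x\equiv 1\ (\mathrm{mod}\ \mathfrak a)$ lies in $(A^\times)^r$; equivalently, the principal congruence subgroup $U_{\mathfrak a}:=\{x\in A^\times:x\equiv 1\ (\mathrm{mod}\ \mathfrak a)\}$ is contained in $(A^\times)^r$. Here I would use two structural inputs. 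First, $A$ is a Noetherian domain whose integral closure $\widetilde A$ in $K:=\mathrm{Frac}(A)$ is module-finite over $A$; a conductor argument permits passage to $\widetilde A$, so I would assume $A$ integrally closed (dealing with the conductor cleanly is one of the technical points). Second, $A^\times$ is a finitely generated abelian group — this is where \cite{Grunewald-Segal} enters (classically, Roquette's unit theorem for finitely generated fields) — so $(A^\times)^r$ has finite index, say $N$, in $A^\times$.

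Next I would pass to the congruence topology. As a finitely generated $\mathbb Z$-algebra which is a domain, $A$ is residually finite; writing $\widehat A_{\mathfrak p}$ for the $\mathfrak p$-adic completion of $A$ at a maximal ideal $\mathfrak p$, the group $A^\times$ embeds into $\widehat A^\times=\prod_{\mathfrak p}\widehat A_{\mathfrak p}^\times$, and the subgroups $U_{\mathfrak a}$ form a neighbourhood basis of $1$. Thus the existence of $\mathfrak a$ with $U_{\mathfrak a}\subseteq(A^\times)^r$ is exactly the assertion that $(A^\times)^r$ is open, equivalently (being of finite index) closed, in this topology. Since $A^\times/(A^\times)^r$ is finite, this in turn reduces to the following statement, which is what I would actually prove: for every $u\in A^\times\smallsetminus(A^\times)^r$ there exist a maximal ideal $\mathfrak p$ and an integer $k\geq 1$ such that the image of $u$ in $(A/\mathfrak p^k)^\times$ is not an $r$-th power. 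Granting this, one chooses such a pair $(\mathfrak p_j,k_j)$ for each representative $u_j$ of the $N-1$ nontrivial classes of $A^\times/(A^\times)^r$ and puts $\mathfrak a:=\prod_j\mathfrak p_j^{k_j}$ (the $\mathfrak p_j$ may be taken distinct, making the factors comaximal): if $x\in U_{\mathfrak a}$ lay in the class of $u_j$, writing $x=u_j w^r$ with $w\in A^\times$ and reducing modulo $\mathfrak p_j^{k_j}$ would exhibit $u_j\equiv (w^{-1})^r$, contradicting the choice of $(\mathfrak p_j,k_j)$; hence $U_{\mathfrak a}\subseteq(A^\times)^r$.

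It remains to treat a fixed $u\in A^\times$ that is not an $r$-th power. Since $A$ is integrally closed, $u\notin(K^\times)^r$ (a root of $X^r-u$ and its inverse are integral over $A$, hence would lie in $A^\times$). Factoring $r=\prod_{\ell}\ell^{a_\ell}$ one has $(K^\times)^r=\bigcap_\ell(K^\times)^{\ell^{a_\ell}}$, so $u\notin(K^\times)^{\ell^{a_\ell}}$ for some prime $\ell\mid r$; taking $b\geq 0$ maximal with $u\in(K^\times)^{\ell^b}$ (necessarily $b<a_\ell$) and writing $u=v^{\ell^b}$ with $v\notin(K^\times)^{\ell}$, it suffices to find a maximal ideal $\mathfrak p$ (of the normalization) whose residue field contains $\zeta_{\ell^{b+1}}$ and in which $v$ is not an $\ell$-th power: a short computation in the cyclic group $(A/\mathfrak p)^\times$ then shows $u=v^{\ell^b}$ is not an $\ell^{a_\ell}$-th power there, a fortiori not an $r$-th power. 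The existence of many such $\mathfrak p$ is a Chebotarev density statement applied to the extension $K(\zeta_{\ell^{b+1}},v^{1/\ell})/K$, and it is at this step that the main arithmetic input from \cite{Chevalley} — the sharp (Grunwald–Wang) local–global theorem for power residues, whose formulation already incorporates the exceptional situations — is used. I expect the main obstacle to be exactly this exceptional case: for $\ell=2$ with $a_2$ large, $v$ may acquire an $\ell$-th root over $K(\zeta_{\ell^{b+1}})$, so that no prime $\mathfrak p$ of the naive type exists; one must then fall back on the precise form of the Grunwald–Wang theorem, together with the observation that such a ``globally deficient'' unit is so rigidly constrained that it is still detected modulo a high power of a prime above $2$ (or else one checks that the hypothesis $u\notin(K^\times)^r$ already excludes the obstruction). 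A secondary technical point, to be handled either by a geometric Chebotarev theorem or by a Hilbert–irreducibility specialization to a number field (where \cite{Grunewald-Segal} again intervenes), is needed when $K$ has positive transcendence degree over $\mathbb Q$. Assembling these, one produces the ideal $\mathfrak a$, hence the desired homomorphism $\phi_r$.
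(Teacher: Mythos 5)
There is a genuine gap at the heart of your argument: the statement you reduce to --- \emph{for every $u\in A^{\times}\smallsetminus (A^{\times})^r$ there exist a maximal ideal $\mathfrak p$ and $k\geq 1$ such that the image of $u$ in $(A/\mathfrak p^k)^{\times}$ is not an $r$-th power} --- is false, and the Grunwald--Wang exceptional case you flag is not a removable technicality but a genuine obstruction to any single-prime detection scheme. Take $A=\mathbb{Z}[\frac12]$, $u=16$, $r=8$. Then $A^{\times}=\{\pm 2^n\}$, so $16=2^4\notin (A^{\times})^8$; yet $x^8-16=(x^2-2)(x^2+2)\big((x+1)^2+1\big)\big((x-1)^2+1\big)$ has a root modulo every odd prime $p$ (one of $2,-2,-1$ is a quadratic residue), hence modulo $p^k$ by Hensel, and the maximal ideals of $A$ all have odd residue characteristic. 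Your two proposed escapes both fail here: there is no prime above $2$ in $A$ to ``detect'' $16$, and the hypothesis $u\notin (K^{\times})^r$ manifestly does not exclude the obstruction. Note also that your citation is off: \cite{Chevalley} is not the Grunwald--Wang theorem but Chevalley's theorem that every finite-index subgroup of a finitely generated group of units contains a congruence subgroup --- i.e.\ essentially the very statement you are trying to prove. That theorem is true, but only because one is allowed \emph{composite} moduli: in the example, no single $\mathfrak p^k$ separates the class of $16$ from $(A^{\times})^8$ in the sense you need, whereas the modulus $q=3\cdot 17$ does give $A^{\times}\cap(1+qA)\subseteq (A^{\times})^8$. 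So your final paragraph, which is where the actual content lives, does not go through, and repairing it amounts to reproving Chevalley's theorem by a different (and necessarily multi-prime) argument.

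Two secondary points. First, what your congruence-class argument actually requires is only that $u_j$ not be congruent to an $r$-th power \emph{of a unit of $A$} modulo $\mathfrak a_j$ --- a weaker and correct target, equivalent to $(A^{\times})^r$ being congruence-closed --- but your Chebotarev argument aims at the stronger residue-symbol statement, which is the one that fails. Second, the positive-transcendence-degree case is not really addressed: the paper disposes of it up front by the Grunewald--Segal specialization theorem, producing a ring homomorphism $\theta:A\to\overline{\mathbb{Q}}$ injective on $A^{\times}$, after which everything happens inside $\mathcal O_K[\frac1b]$ and Chevalley's theorem is quoted as a black box; your sketch leaves both this and the conductor/normalization step (which changes the unit group, hence the meaning of ``$r$-th power in $A^{\times}$'') as acknowledged but unresolved technical debts.
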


\begin{proof} The group of units $A^{\times}$ is finitely generated by \cite{SamuelP}. By the specialization theorem of Grunewald--Segal \cite[Theorem A]{Grunewald-Segal} there is a ring homomorphism $\theta:A\rightarrow \overline{\mathbb{Q}}$ such that the restriction of $\theta$ on $A^{\times}$ is faithful. Since $\theta(A)$ is finitely generated, there is $b\in \mathbb{N}$ such that $\theta(A)$ is contained in $\mathcal{O}_K[\frac{1}{b}]$, for some number field $K$. Given $r\in \mathbb{N}$, by applying Chevalley's theorem \cite[Théorème 1]{Chevalley} for the finitely generated abelian subgroup $\theta(A^{\times})$ of $\mathcal{O}_{K}[\frac{1}{b}]^{\times}$, there is $q\in \mathbb{N}$, relatively prime to $b$, such that whenever $x\in A^{\times}$ is an element with $\theta(x)\equiv 1 (\textup{mod}q)$, then there is $y\in A^{\times}$ such that $\theta(x)=\theta(y)^r$, or equivalently, $x=y^r$ as $\theta$ is injective restricted on $A^{\times}$. Let $(q)\subset \mathcal{O}_K[\frac{1}{b}]$ be the ideal generated by $q$. As $b,q\in \mathbb{N}$ are relatively prime, $\mathbb{Z}[\frac{1}{b}](q)/(q)$ is a finite ring of cardinality  at most equal to $q$. Now since $\mathcal{O}_{K}[\frac{1}{b}]$ is a finitely generated $\mathbb{Z}[\frac{1}{b}]$-module with a basis of $[K:\mathbb{Q}]$ elements, the quotient $\mathsf{R}:=\mathcal{O}_K[\frac{1}{b}]/(q)$ is a finite ring of cardinality at most equal to $q^{[K:\mathbb{Q}]}$.  Thus, if $\pi:\mathcal{O}_K[\frac{1}{b}]\rightarrow \mathsf{R}$ denotes the natural projection and $\pi(\theta(x))=1$ for some $x\in A^{\times}$, then $\theta(x)=1+\kappa qb^{-m}$ for some $m\in \mathbb{N}_{\geq 0}$, $\kappa \in \mathcal{O}_K$, and there is $y\in A^{\times}$ with $x=y^r$. This shows that the homomorphism $\phi_r:=\pi\circ \theta$ and the finite ring $\mathsf{R}$ satisfy the conclusion of the proposition.\end{proof}

We will also repeatedly use the following fact.

\begin{lemma}\textup{(\cite[Lemma 2.6]{Long-Reid})}\label{fi1} Let $\Gamma$ be a group, $\H<\Gamma$ a subgroup and $\H'<\H$ a finite-index subgroup. If $\H'<\Gamma$ is separable, then $\H<\Gamma$ is separable.\end{lemma}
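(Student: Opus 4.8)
The plan is to exploit the fact that separability is detected in finite quotients together with the observation that $\H$ is a \emph{finite} union of cosets of the separable subgroup $\H'$. Fix $g\in\Gamma\smallsetminus\H$; the goal is to produce a finite group $F$ and a homomorphism $\phi\colon\Gamma\to F$ with $\phi(g)\notin\phi(\H)$.

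First I would write $\H=\bigsqcup_{i=1}^{k}\H' h_i$ as a disjoint union of right cosets of $\H'$ in $\H$, where $k=[\H:\H']<\infty$ and $h_1,\dots,h_k\in\H$. Since $g\notin\H$, for every $i$ we have $g\notin\H' h_i$, equivalently $gh_i^{-1}\notin\H'$. Next, invoking separability of $\H'$ in $\Gamma$, for each $i\in\{1,\dots,k\}$ I would choose a finite group $F_i$ and a homomorphism $\phi_i\colon\Gamma\to F_i$ with $\phi_i(gh_i^{-1})\notin\phi_i(\H')$. Setting $F:=F_1\times\cdots\times F_k$ and $\phi:=(\phi_1,\dots,\phi_k)\colon\Gamma\to F$ gives a homomorphism into a finite group.

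Finally I would verify that $\phi(g)\notin\phi(\H)$. Since $\phi$ is a homomorphism, $\phi(\H)=\bigcup_{i=1}^{k}\phi(\H')\phi(h_i)$; hence if $\phi(g)\in\phi(\H)$, then $\phi(g)\phi(h_i)^{-1}=\phi(gh_i^{-1})\in\phi(\H')$ for some $i$, and projecting onto the $i$-th factor yields $\phi_i(gh_i^{-1})\in\phi_i(\H')$, contradicting the choice of $\phi_i$. Thus $\phi$ separates $g$ from $\H$, and since $g$ was arbitrary, $\H<\Gamma$ is separable. Then Lemma~\ref{fi1} follows.

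There is no serious obstacle here; the only point requiring care is that a \emph{single} finite quotient must work simultaneously for all $k$ coset representatives $h_1,\dots,h_k$, which is why one passes to the product $F_1\times\cdots\times F_k$ — and this is precisely where finiteness of the index $[\H:\H']$ enters. Equivalently, one may phrase the whole argument topologically: translates of a set that is closed in the profinite topology on $\Gamma$ are again closed, so each coset $\H' h_i$ is closed, and $\H$, being a finite union of such cosets, is closed, i.e.\ separable.
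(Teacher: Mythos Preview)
Your argument is correct and is the standard proof of this fact. Note, however, that the paper does not supply its own proof of this lemma: it simply quotes the result as \cite[Lemma~2.6]{Long-Reid} and uses it as a black box. There is therefore nothing in the paper to compare your argument against. Your write-up (finite coset decomposition plus the product of the separating quotients, or equivalently the profinite-topology phrasing that a finite union of closed cosets is closed) is exactly the argument one finds in the cited reference.
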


Denote by $\mathsf{B}^{+}_{n}<\mathsf{GL}_n(\mathbb{C})$ the Borel subgroup of upper triangular matrices and by \hbox{$\mathsf{U}^{+}_n<\mathsf{B}^{+}$} the subgroup of unitriangular matrices. We shall prove the following characterization of separability for a solvable subgroup of a finitely generated subgroup of $\mathsf{GL}_n(\mathbb{C})$.

 \begin{theorem}\label{main2} Let $\Gamma<\mathsf{GL}_n(\mathbb{C})$, $n>0$, be a finitely generated group and $\mathsf{S}<\Gamma \cap \mathsf{B}_n^{+}$ a subgroup. Then $\mathsf{S}$ is separable in $\Gamma$ if and only if $\mathsf{S}\cap \mathsf{U}_n^{+}$ is separable in $\Gamma$. \end{theorem}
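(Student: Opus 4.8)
The plan is to work inside $\mathsf{GL}_n(A)$ for a finitely generated subring $A\subset\mathbb C$ with $\Gamma<\mathsf{GL}_n(A)$ (generated by the entries of a finite generating set of $\Gamma$ and the inverse determinants), writing $\Lambda\colon\mathsf B_n^{+}\to\mathsf T_n$ for the projection to the diagonal torus with kernel $\mathsf U_n^{+}$, and setting $V:=\mathsf U_n^{+}\cap\Gamma$, $\Delta:=\mathsf B_n^{+}\cap\Gamma$, $\mathsf U:=\mathsf S\cap\mathsf U_n^{+}$, $T_A:=\mathsf T_n\cap\mathsf{GL}_n(A)=(A^{\times})^{n}$ and $\mathsf B:=\Lambda(\mathsf S)\le T_A$. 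Reductions $\Gamma\to\mathsf{GL}_n(A/\mathfrak a)$ modulo finite-index ideals $\mathfrak a$ and, for each $r\in\mathbb N$, the maps $\Gamma\to\mathsf{GL}_n(\mathsf R)$ induced by the $\phi_r$ of Proposition \ref{order} are finite quotients of $\Gamma$ which carry $\mathsf B_n^{+}$ to upper-triangular matrices and intertwine $\Lambda$ with the diagonal projection of the target; I will also use that $A$ is residually a finite field and, crucially, that $A^{\times}$ is finitely generated \cite{SamuelP}, so that $T_A$ is finitely generated abelian, $\mathsf B$ is closed in its profinite topology, and this topology is generated by the finite-index subgroups $(T_A)^{r}$. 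For the ``only if'' direction I would first note that $V$ is separable in $\Gamma$: any $g\notin V$ has either a strictly-lower entry equal to a nonzero element of $A$ or a diagonal entry $1+a$ with $0\ne a\in A$, and a reduction modulo a maximal ideal missing that element pushes $g$ out of $\mathsf U_n^{+}$ of the quotient while keeping the image of $V$ inside it; since $\mathsf U=\mathsf S\cap V$ and finite intersections of separable subgroups are separable, $\mathsf U$ is separable once $\mathsf S$ is.

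For the ``if'' direction, assume $\mathsf U$ separable, take $g\in\Gamma\smallsetminus\mathsf S$, and produce a finite quotient keeping $g$ out of the image of $\mathsf S$. If $g\notin\Delta$, a reduction modulo a suitable maximal ideal of $A$ sends $g$ outside the upper-triangular matrices, hence outside the image of $\mathsf S$. If $g\in\Delta$ but $\Lambda(g)\notin\mathsf B$, then, as $\mathsf B$ is closed in the $\{(T_A)^{r}\}$-topology, there is $r$ with $\Lambda(g)\notin\mathsf B\,(T_A)^{r}$; in $\mathsf{GL}_n(\mathsf R)$ the diagonal of $\phi_r(g)$ is $\phi_r(\Lambda(g))$ and that of any $\phi_r(s)$ ($s\in\mathsf S$) is $\phi_r(\Lambda(s))$, and an equality $\phi_r(\Lambda(g))=\phi_r(\Lambda(s))$ would give $\phi_r\big(\Lambda(g)\Lambda(s)^{-1}\big)=1$, hence $\Lambda(g)\Lambda(s)^{-1}\in(T_A)^{r}$ by Proposition \ref{order} applied in each of the $n$ coordinates, contradicting the choice of $r$; thus $\phi_r(g)\notin\phi_r(\mathsf S)$.

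The remaining case is $g\in\Delta$ with $\Lambda(g)\in\mathsf B$, and this is where I expect the real work. Choose $s_0\in\mathsf S$ with $\Lambda(s_0)=\Lambda(g)$ and set $h:=s_0^{-1}g$; then $h\in V$, $h\notin\mathsf S$, so $h\in V\smallsetminus\mathsf U$, and a finite quotient separating $h$ from $\mathsf S$ will also separate $g=s_0h$ from $\mathsf S$. Using separability of $\mathsf U$, fix a finite quotient $\psi_0\colon\Gamma\to Q$ with $\psi_0(h)\notin\psi_0(\mathsf U)$ and let $\mathsf S_1:=\mathsf S\cap\ker\psi_0$, of finite index in $\mathsf S$, so that $\Lambda(\mathsf S_1)$ has finite index in $\mathsf B$. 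Since the profinite topology of the finitely generated abelian group $T_A$ induces the profinite topology on its subgroup $\mathsf B$ (a standard fact about finitely generated abelian groups), the finite-index subgroup $\Lambda(\mathsf S_1)$ contains $\mathsf B\cap(T_A)^{r}$ for some $r$. I claim that $q:=\psi_0\times\big(\Gamma\to\mathsf{GL}_n(\mathsf R)\big)$ for this $r$ separates $h$ from $\mathsf S$: if $q(h)=q(s)$ with $s\in\mathsf S$, then $\phi_r(s)=\phi_r(h)$ is unitriangular over $\mathsf R$, so $\phi_r(\Lambda(s))=1$, so $\Lambda(s)\in\mathsf B\cap(T_A)^{r}\subseteq\Lambda(\mathsf S_1)$ by Proposition \ref{order}; taking $s_1\in\mathsf S_1$ with $\Lambda(s_1)=\Lambda(s)$ gives $ss_1^{-1}\in\mathsf S\cap\mathsf U_n^{+}=\mathsf U$ and therefore $\psi_0(h)=\psi_0(s)=\psi_0(ss_1^{-1})\in\psi_0(\mathsf U)$, contradicting the choice of $\psi_0$. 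Hence $q(h)\notin q(\mathsf S)$, and $\mathsf S$ is separable in $\Gamma$.

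The main obstacle is precisely the coordination in this last case: one must first commit to the quotient $\psi_0$ detecting $h\notin\mathsf U$ --- thereby fixing the finite-index subgroup $\Lambda(\mathsf S_1)\le\mathsf B$ --- and only afterwards invoke Proposition \ref{order} with an exponent $r$ large enough that $\mathsf B\cap(T_A)^{r}$ drops inside $\Lambda(\mathsf S_1)$. This is exactly the role of Proposition \ref{order}: the ``$r$-th power'' subgroups $(T_A)^{r}$ are visible in finite quotients of $\Gamma$ only through the specialization homomorphisms it furnishes --- plain congruence reductions do not detect them --- so without it the diagonal part of $\mathsf S$ cannot be pinned down in a single finite quotient, and the argument collapses, as it must, given that abelian subgroups containing unipotents need not be separable.
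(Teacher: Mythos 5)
Your proof is correct, and it takes a genuinely different --- and noticeably shorter --- route than the paper. The paper proves the nontrivial direction by induction on the rank of $\mathsf{S}_1/\mathsf{S}^{+}$: it isolates a single diagonal coordinate $s$ whose entry is not a root of unity, builds the free abelian group $\mathsf{G}_s$ together with a direct complement $\mathsf{T}_s$ in $(A')^{\times}$, and runs a two-case analysis on the exponents $q_i$ in $\nu_s^{d}=\kappa\prod_{i\in I} z_i^{q_i}$, where the second case invokes the inductive hypothesis for the lower-rank subgroup $\langle \mathsf{S}^{+}\cup\{\overline{g}_t:t\notin I\}\rangle$. You avoid the induction entirely by treating the whole diagonal image $\mathsf{B}=\Lambda(\mathsf{S})\le (A^{\times})^{n}$ at once: after reducing to separating the unipotent element $h=s_0^{-1}g$ from $\mathsf{S}$, you first commit to a quotient $\psi_0$ witnessing $h\notin\mathsf{U}$, then choose $r$ with $\mathsf{B}\cap ((A^{\times})^{n})^{r}\le\Lambda(\mathsf{S}\cap\ker\psi_0)$, and only then invoke Proposition \ref{order} for that $r$; the product quotient forces any $s\in\mathsf{S}$ with the same image as $h$ to have diagonal in $\Lambda(\mathsf{S}_1)$, hence to differ from an element of $\mathsf{S}_1$ by an element of $\mathsf{U}$, contradicting the choice of $\psi_0$ --- exactly the coordination you flag as the crux, and it does work. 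The external inputs are the same as the paper's (Samuel's finiteness of $A^{\times}$, Proposition \ref{order}, and congruence quotients in the style of Bergeron for $\Gamma\cap\mathsf{B}_n^{+}$ and $\Gamma\cap\mathsf{U}_n^{+}$); what you additionally use, and should spell out since it replaces the paper's explicit complement $\mathsf{T}_s$, is the standard fact that a finitely generated abelian group is subgroup separable and induces the full profinite topology on each subgroup (for a finite-index $C\le\mathsf{B}$, separate the finitely many nontrivial elements of $\mathsf{B}/C$ inside $(A^{\times})^{n}/C$ by finite quotients, giving a finite-index $D\le (A^{\times})^{n}$ with $D\cap\mathsf{B}\le C$, and then take $r$ the exponent of $(A^{\times})^{n}/D$). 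One small point to make explicit: with your choice of $A$ (entries of generators plus inverse determinants), the diagonal entries of every element of $\Gamma\cap\mathsf{B}_n^{+}$ are indeed units of $A$, since their product is the determinant, which is a unit; this is what lets you take $\mathsf{B}\le (A^{\times})^{n}$ and plays the role of the ring $A'=A[\nu_s,\nu_s^{-1}]$ in the paper.
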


\begin{proof}First, note that if $\mathsf{S}<\Gamma\cap \mathsf{B}_n^{+}$ is separable in $\Gamma$, then, since $\Gamma \cap \mathsf{U}_n^{+}$ is separable in $\Gamma$ (see \cite[page 113]{Bergeron}), $\mathsf{S}\cap \mathsf{U}_n^{+}=\mathsf{S}\cap (\Gamma \cap \mathsf{U}_n^{+})$ is also separable in $\Gamma$.

Now we prove that if $\mathsf{S}^{+}:=\mathsf{S}\cap \mathsf{U}_n^{+}$ is separable in $\Gamma$ then $\mathsf{S}$ is separable. Let $A\subset \mathbb{C}$ be the subring generated by the entries of a finite generating subset of $\Gamma$ such that $\Gamma<\mathsf{GL}_n(A)$. Consider the homomorphism $\pi':\mathsf{B}_n^{+}\rightarrow (\mathbb{C}^{\times})^n$ sending an element $g\in \mathsf{B}_n^{+}$ to the vector of its diagonal entries such that $\pi'(\mathsf{S})$ is contained in the finitely generated abelian group $(A^{\times})^{n}$. Since $\mathsf{S}\cap \textup{ker}\pi' =\mathsf{S}^{+}$ it follows that $\mathsf{S}/\mathsf{S}^{+}$ is a finitely generated abelian group.

If $\mathsf{S}/\mathsf{S}^{+}$ is finite, then, as $\mathsf{S}^{+}$ is separable in $\Gamma$, $\mathsf{S}$ is also separable by Lemma \ref{fi1}. Henceforth, we assume that $\mathsf{S}/\mathsf{S}^{+}$ is infinite and we pass to a finite-index subgroup $\mathsf{S}_1<\mathsf{S}$, containing $\mathsf{S}^{+}$, such that $\mathsf{S}_1/\mathsf{S}^{+}$ is a torsion-free abelian group of rank equal to $q \in \mathbb{N}$.

\begin{claim}\label{claim} For every $m\in \{0,1,\ldots,q \}$ and $\Sigma\subset \mathsf{S}_1$ a finite subset such that $\langle \{w\mathsf{S}^{+}:w \in \Sigma\}\rangle$ is a free-abelian subgroup of $\mathsf{S}_1/\mathsf{S}^{+}$ of rank $m$, then $\langle \mathsf{S}^{+} \cup \Sigma \rangle$ is a separable subgroup of $\Gamma$.\end{claim}

\begin{proof}[Proof of Claim \ref{claim}] We use induction on $m\in \{0,\ldots, q\}$. 
If $m=0$, the claim holds true since by assumption $\langle \mathsf{S}^{+} \cup \Sigma \rangle=\mathsf{S}^{+}$ is separable in $\Gamma$. \par Let $1\leq m\leq q$ and assume that whenever $\Sigma$ is a subset of $\mathsf{S}_1$ such that $\langle \{\sigma \mathsf{S}^{+}:\sigma \in \Sigma\} \rangle$ is a free-abelian group of rank at most $m-1$, then $\langle \mathsf{S}^{+} \cup \Sigma \rangle$ is a separable in $\Gamma$. 

Suppose that $\Sigma'\subset \mathsf{S}_1$ is a finite subset such that $\langle \{\sigma\mathsf{S}^{+}:\sigma\in \Sigma'\}\rangle\cong \mathbb{Z}^m$ and choose a set of free generators $\{g_1\mathsf{S}^{+},\ldots,g_{m}\mathsf{S}^{+}\}$, $g_i\in \Sigma'$. Under the inductive hypothesis, we shall show that $\mathsf{S}_m^{+}:=\langle \mathsf{S}^{+}\cup \Sigma'\rangle=\langle \mathsf{S}^{+} \cup \{g_1,\ldots,g_m\}\rangle$ is separable in $\Gamma$. For for every $i\in \{1,\ldots,m\}$ write \begin{align}\label{g_i}g_i= \begin{pmatrix}[0.9]
\lambda_{1i} & \cdots &  \ast \\
  &  \ddots &  \vdots \\
  &    &\lambda_{ni} \\
\end{pmatrix}.\end{align} Since $g_{m}\mathsf{S}^{+}$ has infinite order in $\mathsf{S}_1/\mathsf{S}^{+}$, fix $s\in \{1,\ldots,n\}$ such that $\lambda_{ms}\in \mathbb{C}$ is not a root of unity. Let $p\in \mathbb{N}$ be the order of the torsion subgroup of $\langle \lambda_{1s},\ldots,\lambda_{ms}\rangle<A^{\times}$ and choose a non-empty subset $I\subset \{1,\ldots,m\}$ such that $\langle \lambda_{1s}^p,\ldots,\lambda_{ms}^p\rangle\cong \mathbb{Z}^{|I|}$ and \begin{align*}\mathsf{G}_{s}:=\langle \lambda_{1s}^p,\ldots,\lambda_{ms}^p\rangle=\langle \{\lambda_{is}^p: i\in I\}\rangle\cong \mathbb{Z}^{|I|}.\end{align*} 

From now own, we fix an ordering for the set $I$ (resp. $\{1,\ldots,m\}\smallsetminus I$) so that any product indexed by $I$ or its complement is with respect to this ordering.

Set $D:=[\langle \lambda_{1s},\ldots,\lambda_{ms}\rangle:\mathsf{G}_s]\leq p^m$ . For every $i\in I$, the $(s,s)$-entry of $g_i^p\in \mathsf{S}_1$ is equal to $z_{i}:=\lambda_{is}^p$. For every $t \notin I$, the $(s,s)$-entry of $g_{t}^{D}\in \mathsf{S}_1$ is equal to $\lambda_{ts}^D\in \mathsf{G}_s$. By writing $\lambda_{ts}^{D}=\prod_{i\in I}\lambda_{i}^{pt_i}$, for some $t_i\in \mathbb{Z}$, we choose $w_{t}:=\prod_{i\in I}g_{i}^{-pt_i}$ such that the $(s,s)$-entry of $w_tg_t^{D}\in \mathsf{S}_1$ is equal to $1$ for $t\notin I$. Let $\overline{g}_t\in \mathsf{S}_1$ be the element defined as follows: \begin{align}\label{gt-def}\overline{g}_t:=\left\{\begin{matrix}
g_{t}^{p}, & t \in I  \\
w_{t}g_{t}^D, & t \notin I  \\
\end{matrix}\right.,\end{align}  and consider the subgroup $\overline{\mathsf{S}}_{m}^{+}:=\langle \mathsf{S}^{+}\cup \{\overline{g}_1,\ldots,\overline{g}_m\}\rangle$ of $\mathsf{S}_1$. Since $\langle \overline{g}_1\mathsf{S}^{+},\ldots, \overline{g}_m\mathsf{S}^{+}\rangle$ is a subgroup of $\langle g_1\mathsf{S}^{+},\ldots, g_m\mathsf{S}^{+}\rangle$ of index at most equal to $p^{|I|}D^{m-|I|}$, \hbox{$\overline{\mathsf{S}}_m^{+}$ is of finite index in $\mathsf{S}_m^{+}$.}

We will show that $\overline{\mathsf{S}}_m^{+}$ is a separable subgroup of $\Gamma$; this is enough to conclude that $\mathsf{S}_m^{+}$ is separable in $\Gamma$ by Lemma \ref{fi1}. For this, note that $\Gamma \cap \mathsf{B}_n^{+}$ is a separable subgroup of $\Gamma$ (see \cite[page 113]{Bergeron}) which clearly contains $\overline{\mathsf{S}}_m^{+}$. Fix $h\in \Gamma\smallsetminus \overline{\mathsf{S}}_m^{+}$. If $h\in \Gamma\smallsetminus \Gamma \cap \mathsf{B}_n^{+}$, there is a finite-index subgroup $N<\Gamma$, containing $\Gamma \cap \mathsf{B}_n^{+}$, such that $h\in\Gamma \smallsetminus N$. Now suppose that $h\in \Gamma \cap \mathsf{B}_n^{+}$ and write $$h= \begin{pmatrix}[0.9]
\nu_1 & \cdots  &  \ast \\
     &  \ddots &  \vdots \\
 &    &\nu_{n} \\
\end{pmatrix}.$$ 

Let $A':=A[\nu_s,\nu_s^{-1}]$ be the subring of $\mathbb{C}$ generated by $A\cup\{\nu_s,\nu_s^{-1}\}$ such that $\Gamma<\mathsf{GL}_n(A')$. Then $\mathsf{G}_s=\langle \{z_i:i\in I\}\rangle$, $z_i=\lambda_{is}^p$, is clearly a subgroup of $(A')^{\times}$, hence, as $(A')^{\times}$ is finitely generated \cite{SamuelP}, we may choose a subgroup\footnote{Consider the quotient $(A')^{\times}/\mathsf{G}_s=\mathbb{Z}^c\times M$, where $M$ is finite and $c\geq 0$.  If $c=0$, we take $\mathsf{T}_s=\{1\}$. If $c>0$, choose $a_1,\ldots,a_{c}\in (A')^{\times}$ such that $\langle a_1\mathsf{G}_s,\ldots,a_{c}\mathsf{G}_s\rangle=\mathbb{Z}^{c}$ and $\mathsf{T}_s=\langle a_1,\ldots,a_{c}\rangle$.} $\mathsf{T}_{s}<(A')^{\times}$ such that $\mathsf{T}_{s}\cap \mathsf{G}_s=\{1\}$, $\mathsf{T}_s\mathsf{G}_{s}\cong \mathsf{T}_{s}\times \mathsf{G}_{s}$ and $\mathsf{T}_s\mathsf{G}_s<(A')^{\times}$ is of finite-index. Let $d:=[(A')^{\times}:\mathsf{T}_s\mathsf{G}_s]$ and consider a decomposition \begin{align}\label{abelian4}\nu_s^{d}=\kappa \prod_{i\in I}z_i^{q_i}\end{align} for some $\kappa \in \mathsf{T}_{s}$ and $q_i\in \mathbb{Z}$ for $i\in I$. 

Given $h\in \Gamma \cap \mathsf{B}^{+}$, we will exhibit a group homomorphism of $\phi:\Gamma\rightarrow F$ onto a finite group $F$ such that $\phi(h)\in F\smallsetminus \phi(\overline{\mathsf{S}}_m^{+})$. There are two cases to consider:

\smallskip

\noindent {\em Case 1.} {\em There is $j \in I$ such that $d$ does not divide $q_j$}.
\smallskip

Suppose there is such $j\in I$ so that $q_j\neq 0$. By Proposition \ref{order} there is a ring homomorphism $\phi_{d}:A'\rightarrow \mathsf{R}$, where $\mathsf{R}$ is a finite ring, such that if $z\in (A')^{\times}$ and $\phi_{d}(z)=1$, then $z$ is a $d$-power of an element in $(A')^{\times}$. Let $(\phi_{d})_{n}:\mathsf{GL}_{n}(A')\rightarrow \mathsf{GL}_{n}(\mathsf{R})$ be the induced homomorphism by applying $\phi_d$ entrywise. Then $(\phi_d)_{n}(h)\notin  (\phi_d)_{n}(\overline{\mathsf{S}}_m^{+})$. If not, since $\mathsf{S}^{+}$ contains the commutator subgroup $[\mathsf{S}_1,\mathsf{S}_1]$ of $\mathsf{S}_1$, we may find $w_0\in \mathsf{S}^{+}$ and integers $k_1,\ldots,k_m\in \mathbb{Z}$ such that $$(\phi_d)_{n}(h)=(\phi_{d})_{n}\bigg(w_0\prod_{i\in I}\overline{g}_i^{k_i}\prod_{t\notin I}\overline{g}_t^{k_t}\bigg).$$ The $(s,s)$-entry of $w_0$ is equal to 1 and recall from (\ref{gt-def}) that  the $(s,s)$-entries of $\overline{g}_t$, $t\notin I$, is $1$ and the $(s,s)$-entry of $\overline{g}_i^{k_i}$ is $z_i^{k_i}$ for $i\in I$. Thus, we obtain $\phi_{d}(\nu_s)=\phi_{d}\big(\prod_{i\in I}z_{i}^{k_i}\big)$. In particular, by the choice of $\phi_d$, there is $y\in (A')^{\times}$ such that $$\nu_s=y^d \prod_{i\in I}z_{i}^{k_i}.$$ Since $y^{d}\in \mathsf{G}_s\mathsf{T}_s=\langle\{z_i:i\in I\}\rangle \mathsf{T}_s$, write $y^d=t_0\prod_{i\in I}z_i^{\beta_i}$ for some $t_0\in \mathsf{T}_s$, $\beta_i\in \mathbb{Z}$. By (\ref{abelian4}) we have that, $$\nu_s^d=\kappa \prod_{i\in I}z_i^{q_i}=y^{d^2}\prod_{i\in I}z_{i}^{dk_i}=t_0^d \prod_{i\in I}z_{i}^{dk_i+d\beta_i}.$$ As $\mathsf{G}_s$ is a free-abelian group with basis $\{z_i:i\in I\}$ and $\mathsf{G}_s\cap \mathsf{T}_s=\{1\}$, we necessarily have $q_j=dk_j+d\beta_{j}$ for every $j\in I$, contradicting the assumption that $d$ does not divide $q_k$. This completes the proof of this case and we take $\phi:=(\phi_d)_{n}$.

\smallskip

\noindent {\em Case 2.} {\em  For every $j \in I$, $q_j$ is a multiple of $d$}.
\smallskip

We consider the element \begin{align}\label{h-bar}\overline{h}:=\bigg(\prod_{i\in I}\overline{g}_{i}^{-\frac{q_i}{d}}\bigg)h.\end{align} Since $h\in\Gamma \smallsetminus \overline{\mathsf{S}}_m^{+}$, clearly $\overline{h}\in\Gamma \smallsetminus \overline{\mathsf{S}}_m^{+}$. Note that the subgroup $\langle \{\overline{g}_t\mathsf{S}^{+}:t\notin I\}\rangle$ of $\mathsf{S}_1/\mathsf{S}^{+}$ is free-abelian of rank at most $m-|I|\leq m-1$, thus, by the inductive step, $\mathsf{S}_{I}^{+}:=\langle \mathsf{S}^{+}\cup \{\overline{g}_t:t\notin I\}\rangle$ is a separable subgroup of $\Gamma$. Since $\mathsf{S}_{I}^{+}<\overline{\mathsf{S}}_m^{+}$ and $h\in \Gamma\smallsetminus \overline{\mathsf{S}}_m^{+}$, there is a homomorphism $\varphi_1:\Gamma \rightarrow F_1$, onto a finite group $F_1$, such that $\varphi_1\big(\overline{h}\big)\in F_1\smallsetminus \varphi_1(\mathsf{S}_{I}^{+})$.

Set $\ell:=d |F_1|$ and by Proposition \ref{order} fix a ring homomorphism $\phi_{\ell}:A'\rightarrow \mathsf{R}'$, where $\mathsf{R}'$ is a finite ring, so that $\phi_{\ell}$ satisfies the conclusion of Proposition \ref{order} for $r:=\ell$. Consider the product homomorphism $\phi:\Gamma \rightarrow  \mathsf{GL}_{n}(\mathsf{R}')\times F_1$, $\phi:=(\phi_{\ell})_{n} \times \varphi_1$.

We claim that $\phi(\overline{h})\notin \phi(\overline{\mathsf{S}}_m^{+})$. If not, since $[\mathsf{S}_1,\mathsf{S}_1]$ is contained in $\overline{\mathsf{S}}_m^{+}$, there are $w_1\in \mathsf{S}^{+}$ and $k_1,\ldots,k_m\in \mathbb{Z}$ with \begin{align}\label{abelian6}(\phi_\ell)_{n}\big(\overline{h}\big)&=(\phi_{\ell})_{n}\bigg(w_1\prod_{i\in I}\overline{g}_i^{k_i}\prod_{t\notin I}\overline{g}_t^{k_t}\bigg),\\  \label{abelian5} \varphi_1\big(\overline{h}\big)&=\varphi_1\bigg(w_1\prod_{i\in I}\overline{g}_i^{k_i}\prod_{t\notin I}\overline{g}_t^{k_t}\bigg).\end{align}
By (\ref{h-bar}), the $(s,s)$-entry of $\overline{h}\in \Gamma$ is equal to $\nu_s\prod_{i\in I}z_{i}^{-\frac{q_i}{d}}$, hence, by looking at the $(s,s)$-entries of both sides in (\ref{abelian6}) we have $\phi_{\ell}\big(\nu_s\prod_{i\in I}z_{i}^{-\frac{q_i}{d}}\big)=\phi_{\ell}\big(\prod_{i\in I}z_{i}^{k_i}\big).$ In particular, by the choice of $\phi_{\ell}$, we may find $\omega\in (A')^{\times}$ such that \begin{align}\label{abelian4'}\nu_s\prod_{i\in I}z_{i}^{-\frac{q_i}{d}}=\omega^{\ell}\prod_{i\in I}z_{i}^{k_i}.\end{align} To this end, as $\omega^d\in \mathsf{T}_s\mathsf{G}_s$, write $\omega^d=t_1\prod_{i\in I}z_{i}^{c_i}$ for some $t_1\in \mathsf{T}_s$, $c_i\in \mathbb{Z}$. By raising both parts of (\ref{abelian4'}) to the $d$-th power and  using (\ref{abelian4}) we conclude that $$\kappa=\nu_s^d\prod_{i\in I}z_i^{-q_i}=t_1^{\ell} \prod_{i\in I}z_{i}^{dk_i+\ell c_i}\in \mathsf{T}_s.$$ Again, as $\mathsf{G}_s$ is free-abelian with basis $\{z_i:i\in I\}$ and $\mathsf{G}_s\cap \mathsf{T}_s=\{1\}$, this implies $dk_i+\ell c_i=0$, or equivalently, $k_i=-\frac{\ell}{d}|F_1|=-c_i|F_1|$ for every $i\in I$. By using (\ref{abelian5}), we see that $$\varphi_1(\overline{h})=\varphi_1(w_1)\prod_{i\in I}\varphi_1(g_i)^{-c_i|F_1|}\prod_{t\notin I}\varphi_1(\overline{g}_t^{k_t})=\varphi_1\bigg(w_1\prod_{t\notin I}\overline{g}_t^{k_t}\bigg),$$ contradicting the fact that $\varphi_1(\overline{h})\in F_1\smallsetminus \varphi_1(\mathsf{S}_{I}^{+})$.
\par Finally, we conclude that $\phi(\overline{h}),\phi(h)\in (\mathsf{GL}_{n}(\mathsf{R}')\times F_1) \smallsetminus \phi(\overline{\mathsf{S}}_m^{+})$ for $\phi:=(\phi_{\ell})_n\times \varphi_1$. This  finishes the proof of Case 2.\end{proof}

Therefore, $\overline{\mathsf{S}}_m^{+}<\Gamma$ is separable and since $\overline{\mathsf{S}}_m^{+}$ has finite index in $\mathsf{S}_m^{+}$, we conclude that $\mathsf{S}_{m}^{+}$ is separable in $\Gamma$. This finishes the proof of the induction and Claim \ref{claim} follows. In particular, we deduce that $\mathsf{S}_1<\Gamma$ is separable. As $\mathsf{S}_1$ has finite index in $\mathsf{S}$, it follows that $\mathsf{S}<\Gamma$ is separable.\end{proof}

\begin{proof}[Proof of Theorem \ref{main}] As $\Gamma$ is finitely generated we may assume that ${\bf F}=\mathbb{C}$. Let $\H<\Gamma$ be an abelian unipotent-free subgroup of $\Gamma$. By the Lie--Kolchin theorem \cite[Theorem 17.6]{Humphreys}, up to conjugating $\Gamma$ by an element  of $\mathsf{GL}_n(\mathbb{C})$, there is a finite-index subgoup $\H_1<\H$ contained in $ \mathsf{B}_n^{+}$. Since $\H_1\cap \mathsf{U}_n^{+}=\{1\}$ is separable in $\Gamma$, Theorem \ref{main2} implies that $\H_1$ is separable in $\Gamma$. It follows by Lemma \ref{fi1} that $\H$ is separable in $\Gamma$. \end{proof}

\begin{rmk}\label{BS} \normalfont{The assumption that the abelian subgroup $\H<\Gamma$ is unipotent-free in the proof of Theorem \ref{main} cannot be dropped as the following well-known example shows. Consider the Baumslag--Solitar group $\mathsf{BS}(1,2)=\langle t,a \ | \ tat^{-1}a^{-2}\rangle$ realized as a subgroup of $\mathsf{GL}_2(\mathbb{Z}[\frac{1}{2}])$ via the embedding $$t\mapsto \begin{pmatrix} 2 & 0  \\  0  & 1 \end{pmatrix}, \ a\mapsto \begin{pmatrix} 1 & 1 \\ 0  & 1 \end{pmatrix}.$$ The subgroup $\langle a \rangle<\mathsf{BS}(1,2)$ is not separable since for any finite group $F$ and a homomorphism $\tau:\mathsf{BS}(1,2)\rightarrow F$ with $\tau(a)\neq 1$, $\tau(a)$ has odd order}.\end{rmk}

While we apply Theorem \ref{main2} above for unipotent-free abelian groups, as a corollary we also deduce separability of solvable subgroups in a linear group containing a maximal unipotent subgroup.

\begin{corollary} Let $\Gamma$ be a finitely generated subgroup of $\mathsf{GL}_n(\mathbb{C})$, $n>0$. Any solvable subgroup of $\Gamma$, containing a maximal unipotent subgroup of $\Gamma$, is separable.\end{corollary}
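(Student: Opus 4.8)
The plan is to reduce the statement to Theorem \ref{main2} together with the separability of $\Gamma\cap\mathsf{U}_n^{+}$. Let $\mathsf{S}<\Gamma$ be solvable and let $\mathsf{V}<\mathsf{S}$ be a maximal unipotent subgroup of $\Gamma$. The key step is to identify $\mathsf{V}$ precisely: I claim that $\mathsf{V}$ is exactly the set of all unipotent elements of $\mathsf{S}$, and in particular is normal in $\mathsf{S}$. To see this, let $\overline{\mathsf{S}}$ be the Zariski closure of $\mathsf{S}$, a solvable algebraic group, and let $\mathsf{U}_0$ be its unipotent radical (equivalently, the unipotent radical of the identity component $\overline{\mathsf{S}}^{\circ}$), a connected unipotent subgroup that is normal in $\overline{\mathsf{S}}$. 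Since $\overline{\mathsf{S}}^{\circ}$ is connected solvable, its set of unipotent elements is precisely $\mathsf{U}_0$, so the identity component of $\overline{\mathsf{S}}/\mathsf{U}_0$ is a torus; hence $\overline{\mathsf{S}}/\mathsf{U}_0$ contains no non-trivial unipotent element. As the quotient morphism preserves unipotency, every unipotent element of $\mathsf{S}$ lies in $\mathsf{N}:=\mathsf{S}\cap\mathsf{U}_0$, which is a normal unipotent subgroup of $\mathsf{S}$. Now $\mathsf{V}$, being a unipotent subgroup of $\mathsf{S}$, is contained in $\mathsf{N}$, while $\mathsf{N}$ is itself a unipotent subgroup of $\Gamma$; maximality of $\mathsf{V}$ in $\Gamma$ then forces $\mathsf{V}=\mathsf{N}=\mathsf{S}\cap\mathsf{U}_0$. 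In particular $\mathsf{V}\subseteq\overline{\mathsf{S}}^{\circ}$.

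Next I would pass to the finite-index subgroup $\mathsf{S}_1:=\mathsf{S}\cap\overline{\mathsf{S}}^{\circ}$ of $\mathsf{S}$, which contains $\mathsf{V}$ by the previous step. As $\overline{\mathsf{S}}^{\circ}$ is connected solvable, the Lie--Kolchin theorem allows us to conjugate $\Gamma$ by an element of $\mathsf{GL}_n(\mathbb{C})$ so that $\overline{\mathsf{S}}^{\circ}<\mathsf{B}_n^{+}$, hence $\mathsf{S}_1<\Gamma\cap\mathsf{B}_n^{+}$. Since the unipotent elements of $\mathsf{B}_n^{+}$ are exactly the elements of $\mathsf{U}_n^{+}$, the set of unipotent elements of $\mathsf{S}_1$ is the subgroup $\mathsf{S}_1\cap\mathsf{U}_n^{+}$; and by the first step this subgroup equals $\mathsf{V}$, because it contains $\mathsf{V}$ (as $\mathsf{V}\subseteq\mathsf{S}_1$) and is contained in $\mathsf{V}$ (as $\mathsf{V}$ contains every unipotent element of $\mathsf{S}\supseteq\mathsf{S}_1$). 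In particular $\mathsf{V}\subseteq\mathsf{U}_n^{+}$, so $\mathsf{V}\subseteq\Gamma\cap\mathsf{U}_n^{+}$, and maximality of $\mathsf{V}$ among unipotent subgroups of $\Gamma$ yields $\mathsf{V}=\Gamma\cap\mathsf{U}_n^{+}$.

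To conclude, $\mathsf{S}_1\cap\mathsf{U}_n^{+}=\mathsf{V}=\Gamma\cap\mathsf{U}_n^{+}$ is separable in $\Gamma$ by \cite[page 113]{Bergeron}, so Theorem \ref{main2} applied to $\mathsf{S}_1<\Gamma\cap\mathsf{B}_n^{+}$ shows that $\mathsf{S}_1$ is separable in $\Gamma$. Since $\mathsf{S}_1$ has finite index in $\mathsf{S}$, Lemma \ref{fi1} gives that $\mathsf{S}$ is separable in $\Gamma$.

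The main obstacle is the first step, identifying $\mathsf{V}$ with the full unipotent part of $\mathsf{S}$; this is where solvability of $\mathsf{S}$ enters, via the structure theory of solvable linear algebraic groups (the unipotent elements of a connected solvable group form the unipotent radical, with torus quotient). One should be careful that $\mathsf{U}_0$ is normal in the possibly disconnected $\overline{\mathsf{S}}$, and that a single conjugation can be used simultaneously to place $\overline{\mathsf{S}}^{\circ}$ into $\mathsf{B}_n^{+}$ and to realize $\mathsf{V}$ as $\Gamma\cap\mathsf{U}_n^{+}$; once these points are handled, the rest is a direct application of Theorem \ref{main2}, Lemma \ref{fi1}, and the separability of $\Gamma\cap\mathsf{U}_n^{+}$.
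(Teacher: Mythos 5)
Your proof is correct and follows the same overall strategy as the paper: reduce to Theorem \ref{main2} via the Lie--Kolchin theorem, the maximality of the unipotent subgroup, the separability of $\Gamma\cap\mathsf{U}_n^{+}$ from \cite[page 113]{Bergeron}, and Lemma \ref{fi1}. The one genuine difference lies in how the maximal unipotent subgroup is matched with $\Gamma\cap\mathsf{U}_n^{+}$: you first identify $\mathsf{V}$ with $\mathsf{S}\cap\mathsf{U}_0$, where $\mathsf{U}_0$ is the unipotent radical of the Zariski closure of $\mathsf{S}$, so that $\mathsf{V}$ is normal in $\mathsf{S}$, consists of \emph{all} unipotent elements of $\mathsf{S}$, and sits inside the connected group $\overline{\mathsf{S}}^{\circ}$ --- which lets a single Lie--Kolchin conjugation simultaneously triangularize the finite-index subgroup $\mathsf{S}\cap\overline{\mathsf{S}}^{\circ}$ and realize $\mathsf{V}=\Gamma\cap\mathsf{U}_n^{+}$. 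The paper instead performs two conjugations, places a finite-index subgroup $\mathsf{S}_0<\mathsf{S}$ into some $g\mathsf{B}_n^{+}g^{-1}$, uses connectedness of the Zariski closure of the maximal unipotent subgroup $\mathsf{U}$ to deduce $\mathsf{U}<g\mathsf{U}_n^{+}g^{-1}$, and applies Theorem \ref{main2} to the product $\mathsf{S}_0\mathsf{U}$; your structural identification of $\mathsf{V}$ via the unipotent radical makes this reconciliation step unnecessary and is arguably the cleaner route.
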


\begin{proof} Let $\mathsf{U}<\Gamma$ be a maximal unipotent subgroup contained in a solvable subgroup $\mathsf{S}<\Gamma$. Up to conjugating $\Gamma$ by an element of $\mathsf{GL}_n(\mathbb{C})$, we may assume that $\mathsf{U}=\Gamma \cap \mathsf{U}_{n}^{+}$, hence $\mathsf{U}$ is separable in $\Gamma$ (see \cite[page 113]{Bergeron}). Since $\mathsf{S}$ is solvable, there is $g\in \mathsf{GL}_n(\mathbb{C})$ and a finite-index subgroup $\mathsf{S}_0<\mathsf{S}$ such that $\mathsf{S}_0<g\mathsf{B}_{n}^{+}g^{-1}$ and $\mathsf{S}_0\cap \mathsf{U}<g\mathsf{U}_{n}^{+}g^{-1}$. In addition, $\mathsf{S}_0\cap \mathsf{U}$ is of finite-index in $\mathsf{U}$, and since the Zariski closure of $\mathsf{U}$ in $\mathsf{GL}_n(\mathbb{C})$ is connected and $\mathsf{U}<\Gamma$ is maximal unipotent, we conclude that $\mathsf{U}<g\mathsf{U}_{n}^{+}g^{-1}$ and $\mathsf{U}=\Gamma\cap g\mathsf{U}_n^{+}g^{-1}$. Therefore, the finite-index subgroup $\mathsf{S}'=\mathsf{S}_0\mathsf{U}$ of $\mathsf{S}$ is contained in $g\mathsf{B}_n^{+}g^{-1}$ and the intersection $\mathsf{S}'\cap g\mathsf{U}_n^{+}g^{-1}=\Gamma\cap g\mathsf{U}_n^{+}g^{-1}=\mathsf{U}$ is separable in $\Gamma$. Thus, by Theorem \ref{main} we conclude that $\mathsf{S}'$ (and hence $\mathsf{S}$) is separable in $\Gamma$.\end{proof}

\subsection*{Acknowledgements} I became aware of the results \cite{Chevalley, Grunewald-Segal} from the preprint \cite{Farb-Alperin}. I would like to thank Subhadip Dey and Benson Farb for comments on a previous draft of this note.

\bibliographystyle{alpha}

\bibliography{biblio.bib}
\end{document}